\theoremstyle{plain}
\theoremstyle{definition}
\newtheorem{theorem}{Theorem}[section]
\newtheorem{lemma}[theorem]{Lemma}
\newtheorem{note}[theorem]{Note}
\newtheorem{convention}[theorem]{Convention}
\newtheorem{remark}[theorem]{Remark}
\theoremstyle{remark}
\numberwithin{equation}{section}
\newcommand{\SP}{\: \: \: \: \:}
\title[Between Devaney chaotic and Li--Yorke chaotic generalized shifts]{On a class between Devaney chaotic and Li--Yorke chaotic generalized shift dynamical systems}
\author[F. Ayatollah Zadeh Shirazi, F. Ebrahimifar, M. Hagh Jooyan, A. Hosseini]{Fatemah Ayatollah Zadeh Shirazi, Fatemeh Ebrahimifar, \\
Maryam Hagh Jooyan, Arezoo Hosseini}
\begin{document}
\begin{abstract} \noindent
In the following text, 
for finite discrete $X$ with at least two elements, nonempty countable $\Gamma$, and $\varphi:\Gamma\to\Gamma$
we prove the generalized shift dynamical system $(X^\Gamma,\sigma_\varphi)$ is densely chaotic  if and only if
$\varphi:\Gamma\to\Gamma$ does not have any (quasi--)periodic point.
Hence the class of all densely chaotic generalized shifts on $X^\Gamma$ is intermediate between
the class of all Devaney chaotic generalized shifts on $X^\Gamma$ and
the class of all Li--Yorke chaotic generalized shifts on $X^\Gamma$. 
In addition, these inclusions are proper for infinite countable $\Gamma$.
\\
Moreover we prove $(X^\Gamma,\sigma_\varphi)$ is 
Li--Yorke sensitive (resp. sensitive, strongly sensitive, asymptotic sensitive, 
syndetically sensitive, cofinitely sensitive, multi--sensitive, ergodically sensitive, spatiotemporally chaotic,
Li--Yorke chaotic) if and only if $\varphi:\Gamma\to\Gamma$ has at least one non--quasi--periodic point.
\end{abstract}
\maketitle
\noindent {\small {\bf 2020 Mathematics Subject Classification:}  37B99 \\
{\bf Keywords:}} Asymptotic sensitive; Densely chaotic; Li-Yorke sensitive;
Sensitive; Spatiotemporally chaotic; Strongly sensitive; Generalized shift.

\section{Introduction}\label{introduction}
\noindent One of the first concepts is introduced in the intermediate mathematical level,
is the concept of a self-map $\varphi:\Gamma\to\Gamma$.
We recall that a point
$a\in\Gamma$ is a {\it periodic point} of $\varphi:\Gamma\to\Gamma$ if there exists $n\geq1$
with $\varphi^n(a)=a$ and it is a {\it quasi--periodic point} of
$\varphi:\Gamma\to\Gamma$ if there exist
$n>m\geq1$ with $\varphi^n(a)=\varphi^m(a)$. We denote the collection of all
periodic (resp. non--quasi--periodic) points of $\varphi:\Gamma\to\Gamma$ with
$Per(\varphi)$ (resp. $W(\varphi)$).
\\
One may find with an elementary approach that the
map $\varphi:\Gamma\to\Gamma$ does not have any periodic point
if and only if $W(\varphi)=\Gamma$ (note that
if $\alpha\in \Gamma\setminus W(\varphi)$,
then it is a quasi--periodic point
and there exist $n>m\geq1$ with $\varphi^m(\alpha)=\varphi^n(\alpha)$
by $\varphi^{n-m}(\varphi^m(\alpha))=\varphi^n(\alpha)$ we have
$\varphi^{n-m}(\varphi^m(\alpha))=\varphi^m(\alpha)$
then $\varphi^m(\alpha)\in Per(\varphi)$,
and $Per(\varphi)\neq\varnothing$).
\\
Now for nonempty set $\Gamma$ and $\varphi:\Gamma\to\Gamma$ we have the following diagram:
{\small
\[\xymatrix{\varphi{\rm \: is \: one \: to \: one \: and\:}Per(\varphi)=\varnothing\ar@{=>}[r] &
W(\varphi)=\Gamma\ar@{=>}[r] & W(\varphi)\neq\varnothing}\]}
For ``suitable'' generalized shift dynamical system $(X^\Gamma,\sigma_\varphi)$
which we deal in this paper,
the first statement in the above diagram
``$\varphi$ is one to one and $Per(\varphi)=\varnothing$'' is equivalent to
``$(X^\Gamma,\sigma_\varphi)$ is Devaney chaotic'' \cite[Theorem 2.13]{dev} and the last statement
``$W(\varphi)\neq\varnothing$'' is equivalent to
``$(X^\Gamma,\sigma_\varphi)$ is Li-Yorke chaotic'' \cite[Theorem 3.3]{li}.
We return to this diagram in Section~4 and show that
the second statement ``$W(\varphi)=\Gamma$'' is equivalent to
``$(X^\Gamma,\sigma_\varphi)$ is densely chaotic'', moreover the implications
of the diagram are not  reversible for infinite $\Gamma$.
\\
Moreover ``sensitive to initial conditions'' or sometimes known as ``butterfly effect'' may is the first concept in
sensitivity approach for a large group of the mathematicians (see \cite{sysdevaney}). However, regarding
different dynamical points of view nowadays we can find various types of sensitivity, old ones and new ones
in metric dynamical systems like mean sensitive,
Li--Yorke sensitivity,  strongly sensitive, ergodically sensitive, multi--sensitive, cofinitely sensitive ... (see e.g.,~\cite{filip, kolyada, chin, was, sharma, tw}), or even in general dynamical systems \cite{fedeli}.
\\
In last section of this text a collection of different types of sensitivities in the category of compact metric generalized shift dynamical systems are compared.
\\
In this text by $\mathbb N$ we mean the set of positive integers $\{1,2,\ldots\}$. Also 
for finite set $A$, $|A|$ denotes the cardinality of $A$
\subsection*{What is a generalized shift?}
\noindent By a {\it dynamical system} (or briefly {\it system})
$(Z,f)$ we mean a topological space $Z$ and continuous map
$f:Z\to Z$. One of the most famous dynamical systems is one-sided
shift dynamical system $\sigma:\{1,\ldots,k\}^{\mathbb
N}\to\{1,\ldots,k\}^{\mathbb N}$ with $\sigma((x_n)_{n\in{\mathbb
N}})=(x_{n+1})_{n\in{\mathbb N}}$ (for $(x_n)_{n\in{\mathbb
N}}\in\{1,\ldots,k\}^{\mathbb N}$). Studying dynamical and
non-dynamical properties of one-sided shift has been considered
by several authors, however the reader may find interesting ideas in \cite{new-york}
too.
\\
For nonempty arbitrary set $X$ with at least two elements and
nonempty set $\Gamma$, we call $\sigma_\varphi:X^\Gamma\to X^\Gamma$
with $\sigma_\varphi((x_\alpha)_{\alpha\in\Gamma})=(x_{\varphi(\alpha)})_{\alpha\in\Gamma}$
(for $(x_\alpha)_{\alpha\in\Gamma}\in X^\Gamma$) a {\it generalized shift}. If $X$ has a
topological structure and $X^\Gamma$ equipped with product (pointwise
convergence) topology, then it is evident that $\sigma_\varphi:X^\Gamma\to X^\Gamma$
is continuous, so we may consider the dynamical system $(X^\Gamma,\sigma_\varphi)$.
Moreover $X^\Gamma$ is a compact metrizable space if and only if $X$ is compact
metrizable and $\Gamma$ is countable. Generalized shift has been introduced for the first
time in 
\cite{note}, which has been followed by studying several properties
of generalized shifts like, topological entropy~\cite{set},
Devaney chaos \cite{dev} and Li-Yorke chaos \cite{li}.
\section{Preliminaries in dynamical systems}
\noindent In the dynamical system $(Z,f)$ with compact metric phase space $(Z,d)$,
we say $x,y\in Z$ are {\it scrambled} or $(x,y)$ is a
{\it scrambled pair} of $(Z,f)$ if
\[{\displaystyle\liminf_{n\to\infty}
d(f^n(x),f^n(y))}=0\:\:{\rm and}\:\:{\displaystyle\limsup_{n\to\infty}
d(f^n(x),f^n(y))}>0\:.\]
We denote the collection of all scarmbled pairs of $(Z,f)$ with $S((Z,d),f)$
or briefly $S(Z,f)$.
We call a subset $A$ of $Z$ with at least two elements an {\it scrambled set} if
every distinct pairs of elements of $A$ is an scrambled pair, i.e.,
$A\times A\subseteq S(Z,f)\cup\Delta_Z$, where
$\Delta_Z=\{(z,z):z\in Z\}$.
Also for $\varepsilon>0$ denote
$\{(x,y)\in
S(Z,f):
{\displaystyle\limsup_{n\to\infty}d(f^n(x),f^n(y))}>\varepsilon\}$
by  $S_\varepsilon((Z,d),f)$
or briefly $S_\varepsilon(Z,f)$.
\begin{note}\label{taha}
Suppose $d$ and $d'$ are two compatible metrics on
compact metrizable space $Z$, then for every $\varepsilon>0$
exists $\delta>0$ such that
\[\forall x,y\in Z\:(d'(x,y)<\delta\Rightarrow
d(x,y)<\varepsilon).\]
Now suppose $\varepsilon,\delta>0$ satisfy the above statement,
$f:Z\to Z$ is continuous, and
$(x,y)\in S_\varepsilon((Z,d),f)$, then
${\displaystyle\liminf_{n\to\infty}d(f^n(x),f^n(y))}=0$ thus
there exists subsequence
$\{d(f^{n_k}(x),f^{n_k}(y))\}_{k\geq1}$
such that
\[{\displaystyle\lim_{k\to\infty}d(f^{n_k}(x),f^{n_k}(y))=
\liminf_{n\to\infty}d(f^n(x),f^n(y))}=0\:.\]
Since $Z\times Z$ is a compact
metrizable space, there exists subsequence
\linebreak $\{(f^{n_{k_l}}(x),f^{n_{k_l}}(y))\}_{l\geq1}$
of $\{(f^{n_k}(x),f^{n_k}(y))\}_{k\geq1}$ converging to a point of $Z\times Z$
like $(z,w)$, hence
$d(z,w)={\displaystyle\lim_{l\to\infty}d(f^{n_{k_l}}(x),f^{n_{k_l}}(y))
=\lim_{k\to\infty}d(f^{n_k}(x),f^{n_k}(y))}=0$ and $z=w$, therefore
${\displaystyle\lim_{l\to\infty}d'(f^{n_{k_l}}(x),f^{n_{k_l}}(y))=d'(z,w)=0}$,
and
\[{\displaystyle\liminf_{n\to\infty}d'(f^n(x),f^n(y))}=0\:.\]
Moreover,
${\displaystyle\limsup_{n\to\infty}d(f^n(x),f^n(y))}>\varepsilon$, hence
there exists subsequence 
\linebreak $\{d(f^{m_t}(x),f^{m_t}(y))\}_{t\geq1}$
such that
\[{\displaystyle\lim_{t\to\infty}d(f^{m_t}(x),f^{m_t}(y))=
\limsup_{n\to\infty}d(f^n(x),f^n(y))}>\varepsilon\:.\]
Again, since $Z\times Z$ is a compact
metrizable space, there exists subsequence
\linebreak $\{(f^{m_{t_l}}(x),f^{m_{t_l}}(y))\}_{l\geq1}$
of $\{(f^{m_t}(x),f^{m_t}(y))\}_{t\geq1}$ converging to a point of $Z\times Z$
like $(u,v)$, hence $d(u,v)={\displaystyle\lim_{l\to\infty}d(f^{m_{t_l}}(x),f^{m_{t_l}}(y))
=\limsup_{n\to\infty}d(f^n(x),f^n(y))}>\varepsilon$ therefore $d'(u,v)\geq\delta$.
Thus
\[{\displaystyle\limsup_{n\to\infty}d'(f^n(x),f^n(y))}\geq
{\displaystyle\lim_{l\to\infty}d'(f^{m_{t_l}}(x),f^{m_{t_l}}(y))}=d'(u,v)\geq
\delta>\delta/2\:,\]
which shows $(x,y)\in S_{\delta/2}((Z,d'),f)$
So we have, $S_\varepsilon((Z,d),f)\subseteq S_{\delta/2}((Z,d'),f)$.
\\
Using the above argument we have:
\[\forall\varepsilon>0\:\exists\mu>0\:(S_{\varepsilon}((Z,d),f)\subseteq
S_{\mu}((Z,d'),f))\]
and $S((Z,d),f)=S((Z,d'),f)$,
i.e. the definition
of an scrambled pair is independent of
chosen compatible metric on $Z$ (for more details see \cite{li}).
\end{note}
\noindent We recall that for $\varepsilon>0$ the dynamical system $(Z,f)$, 
with compact metric phase space
$(Z,d)$,  is:
\begin{itemize}
\item {\it Li-Yorke chaotic}, if $Z$ has an uncountable scrambled subset;
\item {\it Li-Yorke sensitive}, if there exists $\kappa>0$ such that
for every $x\in Z$ and open neighbourhood $U$ of $x$ there exists
$y\in U$ with $(x,y)\in S_\kappa(Z,f)$~\cite{kolyada};
\item {\it densely $\varepsilon-$chaotic}, if
$S_\varepsilon(Z,f)$ is a dense subset of $Z\times Z$~\cite{mor};
\item {\it spatiotemporally chaotic}, if
for every $x\in Z$ and open neighbourhood $U$ of $x$ there exists
$y\in U$ such that $x,y$ are scrambled~\cite{nat};
\item {\it densely chaotic}, if $S(Z,f)$ is a dense subset of $Z\times Z$ \cite{mor};
\item {\it topological transitive}, if for all opene (nonempty and open) subsets $U,V$ of
$Z$ there exists $n\geq1$ with $U\cap f^n(V)\neq\varnothing$~\cite{mai};
\item {\it Devaney chaotic}, if it is topological transitive, $Per(f)$ is dense
in $Z$ (i.e., $(Z,f)$ has {\it dense periodic points}),
and it is sensitive dependence to initial conditions
(by~\cite{banks} sensitivity dependence to initial conditions
is redundant).
\end{itemize}
\begin{convention}\label{taha10}:
In the following text suppose $X$ is a finite discrete space with
at least two elements, $\Gamma$ is a nonempty countable set, and
$\varphi:\Gamma\to\Gamma$ is a self-map.
\\
Suppose $\Gamma=\{\beta_1,\beta_2,\ldots\}$ equip $X^\Gamma$ with metric:
\[D((x_\alpha)_{\alpha\in\Gamma},(y_\alpha)_{\alpha\in\Gamma})
={\displaystyle\sum_{n\geq1}\frac{\delta(x_{\beta_n},y_{\beta_n})}{2^n}}\SP\SP\SP
((x_\alpha)_{\alpha\in\Gamma},(y_\alpha)_{\alpha\in\Gamma}\in X^\Gamma)\]
where 
\[\delta(a,b)=\left\{\begin{array}{lc} 0 & a=b\:, \\ 1 & a\neq b \:. \end{array}\right.\]
Then $D$ is a compatible metric with product topology of $X^\Gamma$.
\end{convention}
\section{Densely chaotic generalized shift dynamical systems}
\noindent In this section we prove that the system
$(X^\Gamma,\sigma_\varphi)$ is densely chaotic if and only if
$\varphi:\Gamma\to\Gamma$ does not have any periodic point, i.e.
$W(\varphi)=\Gamma$.
\begin{lemma}\label{taha40}
If $\varphi:\Gamma\to\Gamma$ does not have any periodic point, then for all finite
nonempty subsets $A,B$ of $\Gamma$, $\{n\in\mathbb{Z}:
\varphi^n(A)\cap B\neq\varnothing\}$ has at most ${\rm card}(A){\rm card}(B)$
elements and it is finite.
\end{lemma}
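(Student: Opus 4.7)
The plan is to construct an injection $\Phi\colon N\to A\times B$, where
\[N:=\{n\in\mathbb{Z}:\varphi^n(A)\cap B\neq\varnothing\},\]
from which both the cardinality bound and the finiteness follow at once. I adopt the standard convention that for $n<0$, $\varphi^n(A)$ denotes the preimage $\{x\in\Gamma:\varphi^{|n|}(x)\in A\}$. Under this reading, $n\in N$ precisely when there exist $a\in A$ and $b\in B$ with $\varphi^n(a)=b$ (if $n\geq 0$) or $\varphi^{-n}(b)=a$ (if $n<0$); for each $n\in N$, I fix one such witness pair and call it $\Phi(n)\in A\times B$.

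The core of the argument is to show that a single pair $(a,b)$ cannot witness two distinct integers, i.e.\ that $\Phi$ is injective. Suppose for contradiction $n_{1}\neq n_{2}$ lie in $N$ with common witness $(a,b)$, and split on signs. If $n_{1}>n_{2}\geq 0$, then $\varphi^{n_{1}}(a)=b=\varphi^{n_{2}}(a)$, so setting $z:=\varphi^{n_{2}}(a)=b$ gives $\varphi^{n_{1}-n_{2}}(z)=z$ with $n_{1}-n_{2}\geq 1$, whence $b\in Per(\varphi)$. The case $0>n_{1}>n_{2}$ is symmetric, with the roles of $a$ and $b$ swapped, and yields $a\in Per(\varphi)$. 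In the mixed case $n_{1}\geq 0>n_{2}$, the two witness equations $\varphi^{n_{1}}(a)=b$ and $\varphi^{|n_{2}|}(b)=a$ compose to $\varphi^{n_{1}+|n_{2}|}(a)=a$ with $n_{1}+|n_{2}|\geq 1$, again forcing $a\in Per(\varphi)$. Each case contradicts the hypothesis that $\varphi$ has no periodic point.

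Consequently $\Phi$ is injective, and so $|N|\leq\mathrm{card}(A)\cdot\mathrm{card}(B)<\infty$, proving both conclusions simultaneously. The only real obstacle is purely clerical: keeping track of signs and the direction of the witness relation across the three cases. Each individual case reduces to the same input — namely, the elementary observation already isolated in the introduction that $\varphi^{p}(z)=\varphi^{q}(z)$ for some $p>q\geq 0$ implies $\varphi^{q}(z)\in Per(\varphi)$ — so no further machinery is required.
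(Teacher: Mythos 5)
Your proof is correct and follows essentially the same route as the paper's: both reduce to showing that a fixed pair $(a,b)\in A\times B$ can witness at most one exponent $n$, via the same three-way case split on the signs of the two exponents. The only cosmetic differences are that you package the bound as an injection $N\to A\times B$ rather than as a union of at-most-singleton sets, and you derive a periodic point directly in each case, whereas the paper first passes to quasi-periodic points and invokes the implication ``no periodic points $\Rightarrow$ no quasi-periodic points.''
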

\begin{proof}
Since $\varphi:\Gamma\to\Gamma$ does not have any periodic point, it
does not have any quasi--periodic point too.
We prove for $\alpha,\beta\in\Gamma$,
$K=\{n\in\mathbb{Z}:\beta\in\varphi^n(\{
\alpha\})\}$ is void or singleton. Otherwise there exists distinct $n,m\in K$.
Suppose $n<m$, we have the following cases:
\begin{itemize}
\item $0\leq n<m$. In this case $\varphi^{n+1}(\alpha)=\varphi(\beta)=
\varphi^{m+1}(\alpha)$ and $\alpha$ is a quasi--periodic point of $\varphi$.
\item $n<0\leq m$. In this case $\varphi^{-n}(\beta)=\alpha$ and $\beta=
\varphi^m(\alpha)$ which leads to $\varphi^{-n+m+1}(\alpha)=
\varphi^{-n+1}(\beta)=\varphi(\alpha)$
and $\alpha$ is a quasi--periodic point of $\varphi$.
\item $n<m<0$. In this case $\varphi^{-n}(\beta)=\alpha=\varphi^{-m}(\beta)$
and $\beta$ is a quasi--periodic point of $\varphi$.
\end{itemize}
Using the above cases $\varphi:\Gamma\to\Gamma$ has a quasi--periodic point,
which is a contradiction. Hence $K$ has at most one element,
which leads to the fact that
\[\{n\in\mathbb{Z}:
\varphi^n(A)\cap B\neq\varnothing\}=
\bigcup\{\{n\in\mathbb{Z}:\beta\in\varphi^n(\{\alpha\})\}:(\alpha,\beta)
\in A\times B\}\]
has at most ${\rm card}(A){\rm card}(B)$
elements.
\end{proof}
\begin{lemma}\label{taha50}
Suppose $\varphi:\Gamma\to\Gamma$ does not have any periodic point,
$((x_\alpha)_{\alpha\in\Gamma},(y_\alpha)_{\alpha\in\Gamma})\in
S(X^\Gamma,\sigma_\varphi)$,
$((z_\alpha)_{\alpha\in\Gamma},(w_\alpha)_{\alpha\in\Gamma})\in X^\Gamma
\times X^\Gamma$ and there exist $\psi_1,\ldots,\psi_n\in\Gamma$ such that
for all $\alpha\in\Gamma\setminus\{\psi_1,\ldots,\psi_n\}$ we have
both $x_\alpha=z_\alpha$ and $y_\alpha=w_\alpha$. Then
$((z_\alpha)_{\alpha\in\Gamma},(w_\alpha)_{\alpha\in\Gamma})\in
S(X^\Gamma,\sigma_\varphi)$ and:
\[{\displaystyle\limsup_{t\to\infty}D(\sigma_\varphi^t(
(x_\alpha)_{\alpha\in\Gamma}),\sigma_\varphi^t(
(y_\alpha)_{\alpha\in\Gamma}))}={\displaystyle\limsup_{t\to\infty}
D(\sigma_\varphi^t((z_\alpha)_{\alpha\in\Gamma}),\sigma_\varphi^t(
(w_\alpha)_{\alpha\in\Gamma}))}\:.\]
\end{lemma}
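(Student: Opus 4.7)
The plan is to show that $\sigma_\varphi^t((x_\alpha)_{\alpha\in\Gamma})$ and $\sigma_\varphi^t((z_\alpha)_{\alpha\in\Gamma})$ become asymptotically indistinguishable (and similarly for the $y,w$ pair), so that the triangle inequality forces the limsup identity and transfers scrambledness from $(x,y)$ to $(z,w)$. Set $A=\{\psi_1,\ldots,\psi_n\}$. The key observation is that for any enumerated coordinate $\beta_k$, the sequences $\sigma_\varphi^t((x_\alpha))$ and $\sigma_\varphi^t((z_\alpha))$ can differ at $\beta_k$ only when $x_{\varphi^t(\beta_k)}\neq z_{\varphi^t(\beta_k)}$, which by hypothesis forces $\varphi^t(\beta_k)\in A$.

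Now the previous lemma (Lemma~\ref{taha40}) applied to the finite sets $\{\beta_k\}$ and $A$ tells us that $\{t\geq 0:\varphi^t(\beta_k)\in A\}$ has at most $n$ elements. Given any $\varepsilon>0$, choose $N$ with $\sum_{k>N}2^{-k}<\varepsilon/2$; the set of ``bad'' iteration times, namely $\bigcup_{k=1}^{N}\{t\geq 0:\varphi^t(\beta_k)\in A\}$, is then finite, since it is a finite union of finite sets. For every $t$ outside this finite exceptional set, the first $N$ coordinates of $\sigma_\varphi^t((x_\alpha))$ and $\sigma_\varphi^t((z_\alpha))$ coincide, so
\[
D\bigl(\sigma_\varphi^t((x_\alpha)),\sigma_\varphi^t((z_\alpha))\bigr)\leq\sum_{k>N}\frac{1}{2^k}<\frac{\varepsilon}{2}.
\]
This shows $\lim_{t\to\infty}D(\sigma_\varphi^t((x_\alpha)),\sigma_\varphi^t((z_\alpha)))=0$, and the identical argument gives $\lim_{t\to\infty}D(\sigma_\varphi^t((y_\alpha)),\sigma_\varphi^t((w_\alpha)))=0$.

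With these two asymptotic vanishings in hand, the triangle inequality applied to
\[
D\bigl(\sigma_\varphi^t((z_\alpha)),\sigma_\varphi^t((w_\alpha))\bigr)
\quad\text{vs.}\quad
D\bigl(\sigma_\varphi^t((x_\alpha)),\sigma_\varphi^t((y_\alpha))\bigr)
\]
(in both directions) shows that the two $D$-sequences differ by a sequence tending to $0$; hence they have the same $\limsup$ and the same $\liminf$. The equality of limsups is exactly the displayed identity in the statement, and combined with $\liminf_{t\to\infty}D(\sigma_\varphi^t((x_\alpha)),\sigma_\varphi^t((y_\alpha)))=0$ together with $\limsup>0$ (which hold because $(x,y)$ is scrambled), this yields $(z,w)\in S(X^\Gamma,\sigma_\varphi)$. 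The only real obstacle is controlling all the first $N$ coordinates simultaneously for large $t$, which is exactly the content of Lemma~\ref{taha40}; after that the proof is a transparent application of the triangle inequality.
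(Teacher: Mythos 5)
Your proof is correct and takes essentially the same route as the paper: both rest on Lemma~\ref{taha40} to ensure that for all sufficiently large $t$ the points $\varphi^t(\beta_1),\ldots,\varphi^t(\beta_N)$ avoid $\{\psi_1,\ldots,\psi_n\}$, followed by the tail estimate $\sum_{k>N}2^{-k}<\varepsilon/2$. The only cosmetic difference is that you package the conclusion via the asymptotic vanishing of $D(\sigma_\varphi^t((x_\alpha)),\sigma_\varphi^t((z_\alpha)))$ and $D(\sigma_\varphi^t((y_\alpha)),\sigma_\varphi^t((w_\alpha)))$ plus the triangle inequality, whereas the paper bounds $\left|D(\sigma_\varphi^t((x_\alpha)),\sigma_\varphi^t((y_\alpha)))-D(\sigma_\varphi^t((z_\alpha)),\sigma_\varphi^t((w_\alpha)))\right|$ directly coordinate by coordinate.
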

\begin{proof}
Given $\varepsilon>0$ there exists $N\geq1$ with
${\displaystyle\sum_{i\geq N}\frac1{2^i}}<\frac{\varepsilon}2$.
Using Lemma~\ref{taha40} there exists $K\geq1$ with
\begin{center}
$\varphi^t(\{\beta_1,\ldots,\beta_N\})\cap\{\psi_1,\ldots,\psi_n\}=\varnothing$
for all $t\geq K$.
\end{center}
Thus $x_{\varphi^t(\beta_i)}=z_{\varphi^t(\beta_i)}$ and
$y_{\varphi^t(\beta_i)}=w_{\varphi^t(\beta_i)}$ for all $1\leq i\leq N$ and
$t\geq K$. Hence for all $t\geq K$ we have:


 $\left|D(\sigma_\varphi^t((x_\alpha)_{\alpha\in\Gamma}),\sigma_\varphi^t(
(y_\alpha)_{\alpha\in\Gamma}))-
D(\sigma_\varphi^t((z_\alpha)_{\alpha\in\Gamma}),\sigma_\varphi^t(
(w_\alpha)_{\alpha\in\Gamma}))\right|$
\begin{align*}
& =  \left|{\displaystyle\sum_{i\geq1}\frac{\delta(x_{\varphi^t(\beta_i)},
y_{\varphi^t(\beta_i)})}{2^i}}-
{\displaystyle\sum_{i\geq1}\frac{\delta(z_{\varphi^t(\beta_i)},
w_{\varphi^t(\beta_i)})}{2^i}}\right| \\
& \leq  \left|{\displaystyle\sum_{1\leq i\leq N}\left(
\frac{\delta(x_{\varphi^t(\beta_i)},
y_{\varphi^t(\beta_i)})-\delta(z_{\varphi^t(\beta_i)},
w_{\varphi^t(\beta_i)})}{2^i}\right)}\right|+ \\
&  \SP\SP\SP\SP\SP
\left|{\displaystyle\sum_{i>N}\left(
\frac{\delta(x_{\varphi^t(\beta_i)},
y_{\varphi^t(\beta_i)})-\delta(z_{\varphi^t(\beta_i)},
w_{\varphi^t(\beta_i)})}{2^i}\right)}\right| \\
& =  \left|{\displaystyle\sum_{1\leq i\leq N}\left(
\frac{\delta(x_{\varphi^t(\beta_i)},
y_{\varphi^t(\beta_i)})-\delta(x_{\varphi^t(\beta_i)},
y_{\varphi^t(\beta_i)})}{2^i}\right)}\right|+ \\
&  \SP\SP\SP\SP\SP
\left|{\displaystyle\sum_{i>N}\left(
\frac{\delta(x_{\varphi^t(\beta_i)},
y_{\varphi^t(\beta_i)})-\delta(z_{\varphi^t(\beta_i)},
w_{\varphi^t(\beta_i)})}{2^i}\right)}\right| \\
& =  \left|{\displaystyle\sum_{i>N}\left(
\frac{\delta(x_{\varphi^t(\beta_i)},
y_{\varphi^t(\beta_i)})-\delta(z_{\varphi^t(\beta_i)},
w_{\varphi^t(\beta_i)})}{2^i}\right)}\right| \\
& \leq  {\displaystyle\sum_{i>N}\left(
\frac{\delta(x_{\varphi^t(\beta_i)},
y_{\varphi^t(\beta_i)})+\delta(z_{\varphi^t(\beta_i)},
w_{\varphi^t(\beta_i)})}{2^i}\right)}\leq {\displaystyle\sum_{i>N}
\frac{2}{2^i}}<\varepsilon \: . 
\end{align*}
Using
{\small \[\left|D(\sigma_\varphi^t((x_\alpha)_{\alpha\in\Gamma}),\sigma_\varphi^t(
(y_\alpha)_{\alpha\in\Gamma}))-
D(\sigma_\varphi^t((z_\alpha)_{\alpha\in\Gamma}),\sigma_\varphi^t(
(w_\alpha)_{\alpha\in\Gamma}))\right|<\varepsilon\SP (\forall t\geq K)\:,\]}
we have:
{\small \[\left|{\displaystyle\limsup_{t\to\infty}D(\sigma_\varphi^t(
(x_\alpha)_{\alpha\in\Gamma}),\sigma_\varphi^t(
(y_\alpha)_{\alpha\in\Gamma}))}-{\displaystyle\limsup_{t\to\infty}
D(\sigma_\varphi^t((z_\alpha)_{\alpha\in\Gamma}),\sigma_\varphi^t(
(w_\alpha)_{\alpha\in\Gamma}))}\right|\leq\varepsilon \:,\] }
and
{\small \[\left|{\displaystyle\liminf_{t\to\infty}D(\sigma_\varphi^t(
(x_\alpha)_{\alpha\in\Gamma}),\sigma_\varphi^t(
(y_\alpha)_{\alpha\in\Gamma}))}-{\displaystyle\liminf_{t\to\infty}
D(\sigma_\varphi^t((z_\alpha)_{\alpha\in\Gamma}),\sigma_\varphi^t(
(w_\alpha)_{\alpha\in\Gamma}))}\right|\leq\varepsilon\:.\] }
Note to the fact that $\varepsilon>0$ is arbitrary, we have:
\[{\displaystyle\limsup_{t\to\infty}D(\sigma_\varphi^t(
(x_\alpha)_{\alpha\in\Gamma}),\sigma_\varphi^t(
(y_\alpha)_{\alpha\in\Gamma}))}={\displaystyle\limsup_{t\to\infty}
D(\sigma_\varphi^t((z_\alpha)_{\alpha\in\Gamma}),\sigma_\varphi^t(
(w_\alpha)_{\alpha\in\Gamma}))}\:,\]
and
\[{\displaystyle\liminf_{t\to\infty}D(\sigma_\varphi^t(
(x_\alpha)_{\alpha\in\Gamma}),\sigma_\varphi^t(
(y_\alpha)_{\alpha\in\Gamma}))}={\displaystyle\liminf_{t\to\infty}
D(\sigma_\varphi^t((z_\alpha)_{\alpha\in\Gamma}),\sigma_\varphi^t(
(w_\alpha)_{\alpha\in\Gamma}))}\:,\]
which lead to the desired result.
\end{proof}
\noindent The following remark deal with the situation
$W(\varphi)\neq\varnothing$, and its connection to Li--Yorke and topological
chaoticity of $(X^\Gamma,\sigma_\varphi)$.
Let's recall that the system $(Z,f)$ is {\it
topologically chaotic} if it has positive topological entropy (for the
definition of topological entropy and more details see
\cite{walters}).
\begin{remark}\label{factA}
By \cite[Theorem 4.7]{set}, topological entropy of
$\sigma_\varphi:X^\Gamma\to X^\Gamma$ is equal to ${\mathfrak o}(\varphi)\log|X|$, where:
\begin{center}
${\mathfrak o}(\varphi) =\sup(\{0\}\cup\{n\in\mathbb{N}:$ there exist $\alpha_1
,\ldots,\alpha_n\in\Gamma$ such that $\{\varphi^m(\alpha_1)\}_{m\geq1},\ldots,\{\varphi^m(\alpha_n)\}_{m\geq1}$
are infinite and pairwise disjoint $\})$.
\end{center}
Hence $(X^\Gamma,\sigma_\varphi)$ is topological chaotic if and
only if ${\mathfrak o}(\varphi)>0 $, i.e.,
$\varphi:\Gamma\to\Gamma $ has at least one non-quasi--periodic
point.
So by \cite[Theorem 3.3]{li}
the system
$(X^\Gamma,\sigma_\varphi)$ is Li-Yorke chaotic (resp. has an
scrambled pair) if and only if the map $\varphi:\Gamma\to\Gamma$
has at least one non--quasi--periodic point which is equivalent to topological chaoticity of 
$(X^\Gamma,\sigma_\varphi)$ in its turn.
\end{remark}
\begin{lemma}\label{narjes10}
If $\varphi:\Gamma\to\Gamma$ does not have any periodic point,
then there exists $\mu>0$ such that:
\[\forall x\in X^\Gamma\:\:\exists y\in X^\Gamma\:\:((x,y)\in
S(X^\Gamma,\sigma_\varphi)\wedge{\displaystyle\limsup_{t\to\infty}D(\sigma_\varphi^t(
x),\sigma_\varphi^t(
y))}=\mu)\:.\]
\end{lemma}
\begin{proof}
Since $\varphi:\Gamma\to\Gamma$ does not have any periodic point,
it does not have any quasi--periodic point too, and all of points of
$\Gamma$ are non--quasi--periodic, hence by Remark~\ref{factA} there
exists
$((p_\alpha)_{\alpha\in\Gamma},(q_\alpha)_{\alpha\in\Gamma})\in
S(X^\Gamma,\sigma_\varphi)$, let:
\[\mu:={\displaystyle\limsup_{t\to\infty}D(\sigma_\varphi^t(
(p_\alpha)_{\alpha\in\Gamma}),\sigma_\varphi^t(
(q_\alpha)_{\alpha\in\Gamma}))}\:.\]
Consider
$(x_\alpha)_{\alpha\in\Gamma}\in X^\Gamma$ and choose
$(y_\alpha)_{\alpha\in\Gamma}\in X^\Gamma$ such that:
\[y_\alpha\left\{\begin{array}{lc} =x_\alpha & p_\alpha=q_\alpha\: , \\
\in \{p_\alpha,q_\alpha\}\setminus\{x_\alpha\} & p_\alpha\neq q_\alpha\:.
\end{array}\right.\]
Hence $x_\alpha=y_\alpha$ if and only if $p_\alpha=q_\alpha$, therefore
\[\delta(x_\alpha,y_\alpha)=\delta(p_\alpha,q_\alpha)\SP(
\forall\alpha\in\Gamma)\:.\]
For all $t\geq1$ we have:
\begin{align*}
D(\sigma_\varphi^t((x_\alpha)_{\alpha\in\Gamma}),\sigma_\varphi^t(
(y_\alpha)_{\alpha\in\Gamma})) & = 
{\displaystyle\sum_{i\geq1}\frac{\delta(x_{\varphi^t(\beta_i)},
y_{\varphi^t(\beta_i)})}{2^i}}  =  {\displaystyle\sum_{i\geq1}\frac{\delta(p_{\varphi^t(\beta_i)},
q_{\varphi^t(\beta_i)})}{2^i}} \\
& =  D(\sigma_\varphi^t((p_\alpha)_{\alpha\in\Gamma}),\sigma_\varphi^t(
(q_\alpha)_{\alpha\in\Gamma}))
\end{align*}
which leads to
{\small \[{\displaystyle\liminf_{t\to\infty}
D(\sigma_\varphi^t((x_\alpha)_{\alpha\in\Gamma}),\sigma_\varphi^t(
(y_\alpha)_{\alpha\in\Gamma})) }={\displaystyle\liminf_{t\to\infty}
D(\sigma_\varphi^t((p_\alpha)_{\alpha\in\Gamma}),\sigma_\varphi^t(
(q_\alpha)_{\alpha\in\Gamma})) }=0\:,\]
 \[{\displaystyle\limsup_{t\to\infty}
D(\sigma_\varphi^t((x_\alpha)_{\alpha\in\Gamma}),\sigma_\varphi^t(
(y_\alpha)_{\alpha\in\Gamma})) }={\displaystyle\limsup_{t\to\infty}
D(\sigma_\varphi^t((p_\alpha)_{\alpha\in\Gamma}),\sigma_\varphi^t(
(q_\alpha)_{\alpha\in\Gamma})) }=\mu>0\:,\]}
and $((x_\alpha)_{\alpha\in\Gamma},(y_\alpha)_{\alpha\in\Gamma})\in
S(X^\Gamma,\sigma_\varphi)$.
\end{proof}
\noindent Now we have the
following lemma.
\begin{lemma}\label{javad10}
If $(X^\Gamma,\sigma_\varphi)$ is densely chaotic, then $W(\varphi)=\Gamma$ (i.e.,
$\varphi:\Gamma\to\Gamma$ does not have any periodic point).
\end{lemma}
\begin{proof}
Consider $\beta\in Per(\varphi)$ and $n\geq1$ with
$\varphi^n(\beta)=\beta$, we prove
$(X^\Gamma,\sigma_\varphi)$ is not densely chaotic.
We may also suppose
$\beta_1=\beta,\beta_2=\varphi(\beta),\ldots,\beta_n=\varphi^{n-1}(\beta)$.
Choose distinct $p,q\in X$ and let:
\[U_\alpha=\left\{\begin{array}{lc}
\{p\}&\alpha=\beta_1,\ldots,\beta_n\:, \\ X &
{\rm otherwise\:,}\end{array}\right.
\SP\SP V_\alpha=\left\{\begin{array}{lc}
\{q\}&\alpha=\beta_1,\ldots,\beta_n\:, \\ X &
{\rm otherwise\:,}\end{array}\right.\]
also let:
\[U={\displaystyle\prod_{\alpha\in\Gamma}U_\alpha}\SP,\SP
V={\displaystyle\prod_{\alpha\in\Gamma}V_\alpha}\:,\]
then $U\times V$ is an opene subset of $X^\Gamma\times X^\Gamma$.
Consider
$(x,y)=((x_\alpha)_{\alpha\in\Gamma},(y_\alpha)_{\alpha\in\Gamma})\in U\times V$
we have $x_{\beta_1}=\cdots=x_{\beta_n}=p$ and
$y_{\beta_1}=\cdots=y_{\beta_n}=q$.
For all $k\geq1$ we have
$\varphi^k(\beta)\in\{\beta_1,\ldots,\beta_n\}$ which leads to
$x_{\varphi^k(\beta)}=p$ and $y_{\varphi^k(\beta)}=q$, thus:
\begin{align*}
D(\sigma_\varphi^k((x_\alpha)_{\alpha\in\Gamma}
,(y_\alpha)_{\alpha\in\Gamma})) & = D
((x_{\varphi^k(\alpha)})_{\alpha\in\Gamma},
(y_{\varphi^k(\alpha)})_{\alpha\in\Gamma}) \\
& \geq  \frac12\delta(x_{\varphi^k(\beta)},y_{\varphi^k(\beta)})
=\frac12\delta(p,q)=\frac12\:.
\end{align*}
Hence
\[{\displaystyle\liminf_{k\to\infty}
D(\sigma_\varphi^k((x_\alpha)_{\alpha\in\Gamma},
(y_\alpha)_{\alpha\in\Gamma}))}\geq\frac12\]
and $(x,y)\notin S(X^\Gamma,\sigma_\varphi)$.
Using $(U\times V)\cap S(X^\Gamma,\sigma_\varphi)=\varnothing$
we have the desired result.
\end{proof}
\noindent Now we are
ready to characterize densely chaotic generalized shifts.
\begin{theorem}[Densely chaotic generalized shifts]\label{factB}
For finite discrete $X$ with at least two elements, nonempty countable set $\Gamma$
and $\varphi:\Gamma\to\Gamma$, the following statements are equivalent:
\begin{itemize}
\item[1.] for some $\varepsilon>0$,
$(X^\Gamma,\sigma_\varphi)$
is densely $\varepsilon-$chaotic;
\item[2.] the system $(X^\Gamma,\sigma_\varphi)$
is densely chaotic;
\item[3.] the map $\varphi:\Gamma\to\Gamma$ does not have any periodic point
(i.e., $W(\varphi)=\Gamma$).
\end{itemize}
\end{theorem}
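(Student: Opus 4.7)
The plan is to prove the five conditions equivalent by closing the cycle $5 \Rightarrow 1 \Rightarrow 3 \Rightarrow 4 \Rightarrow 5$, and then splicing in condition~2 by separately verifying $5 \Rightarrow 2 \Rightarrow 4$. Most of the arrows are already essentially available: $5 \Rightarrow 1$ is Proposition~\ref{javad20}; $1 \Rightarrow 3$ is immediate because $S_\kappa(X^\Gamma,\sigma_\varphi) \subseteq S(X^\Gamma,\sigma_\varphi)$ so a Li--Yorke sensitive partner is automatically a scrambled partner; $3 \Rightarrow 4$ is noted before Proposition~\ref{javad10}; $4 \Rightarrow 5$ is Proposition~\ref{javad10}; and $2 \Rightarrow 4$ is immediate from $S_\varepsilon \subseteq S$. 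So the only new implication that requires actual work is $5 \Rightarrow 2$.

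For $5 \Rightarrow 2$, I would proceed as follows. Assume $\varphi$ has no periodic point. By Lemma~\ref{narjes10}, applied to any fixed base point, there exists a scrambled pair $((p_\alpha)_{\alpha\in\Gamma},(q_\alpha)_{\alpha\in\Gamma}) \in S(X^\Gamma,\sigma_\varphi)$ and a uniform constant $\mu > 0$ with
\[
\limsup_{t\to\infty} D\!\left(\sigma_\varphi^t((p_\alpha)_{\alpha\in\Gamma}),\sigma_\varphi^t((q_\alpha)_{\alpha\in\Gamma})\right) = \mu .
\]
Fix any $\varepsilon \in (0,\mu)$. To show $S_\varepsilon(X^\Gamma,\sigma_\varphi)$ is dense in $X^\Gamma \times X^\Gamma$, consider an arbitrary basic open neighborhood of a pair $((a_\alpha)_{\alpha\in\Gamma},(b_\alpha)_{\alpha\in\Gamma})$, specified by prescribing coordinates on a finite set $\{\beta_1,\ldots,\beta_N\}\subseteq\Gamma$. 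Define
\[
z_\alpha = \begin{cases} a_\alpha, & \alpha \in \{\beta_1,\ldots,\beta_N\}, \\ p_\alpha, & \text{otherwise,}\end{cases}
\qquad
w_\alpha = \begin{cases} b_\alpha, & \alpha \in \{\beta_1,\ldots,\beta_N\}, \\ q_\alpha, & \text{otherwise.}\end{cases}
\]
Since the pair $((z_\alpha)_{\alpha\in\Gamma},(w_\alpha)_{\alpha\in\Gamma})$ differs from the scrambled pair $((p_\alpha),(q_\alpha))$ only on the finite set $\{\beta_1,\ldots,\beta_N\}$, Lemma~\ref{taha50} applies and yields that $((z_\alpha),(w_\alpha)) \in S(X^\Gamma,\sigma_\varphi)$ with the same $\limsup = \mu > \varepsilon$. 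Therefore $((z_\alpha),(w_\alpha)) \in S_\varepsilon(X^\Gamma,\sigma_\varphi)$, and by construction it lies in the prescribed neighborhood of $((a_\alpha),(b_\alpha))$, which gives density.

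The main technical point is the finite-coordinate surgery controlled by Lemma~\ref{taha50}: this is exactly what upgrades the existence of a single scrambled pair (with a quantitative $\limsup = \mu$) to density of $S_\varepsilon$ throughout the full product space $X^\Gamma \times X^\Gamma$, rather than merely near the diagonal. Once this observation is in place, the rest is just assembling the cycle: chaining the implications gives $5 \Rightarrow 2 \Rightarrow 4 \Rightarrow 5$ together with $5 \Rightarrow 1 \Rightarrow 3 \Rightarrow 4$, which establishes the mutual equivalence of all five statements.
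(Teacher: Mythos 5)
Your decomposition is close to the paper's: the paper also runs the single chain $5\Rightarrow 1\Rightarrow 2\Rightarrow 3\Rightarrow 4\Rightarrow 5$, using Propositions~\ref{javad20} and \ref{javad10} for the two substantive arrows and treating $1\Rightarrow 2\Rightarrow 3\Rightarrow 4$ as general facts. Where you genuinely differ is condition~2: instead of invoking ``every Li--Yorke sensitive system is densely $\varepsilon$-chaotic'', you prove $5\Rightarrow 2$ directly by overwriting the scrambled pair supplied by Lemma~\ref{narjes10} on the finitely many coordinates that a basic open box prescribes and then applying Lemma~\ref{taha50}. That argument is correct, and it is the stronger route: Li--Yorke sensitivity and spatiotemporal chaos only assert that scrambled pairs accumulate on the diagonal of $X^\Gamma\times X^\Gamma$, whereas dense ($\varepsilon$-)chaos demands that they accumulate at every pair, including pairs of far-apart points. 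Your finite-coordinate surgery is exactly what bridges that distinction, and it yields a genuinely closed cycle $5\Rightarrow 2\Rightarrow 4\Rightarrow 5$ (plus the unproblematic $5\Rightarrow 1\Rightarrow 3$).

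The genuine gap is the arrow $3\Rightarrow 4$, which you import from the paper's remark before Proposition~\ref{javad10}. For precisely the near-diagonal-versus-global reason above, ``spatiotemporally chaotic $\Rightarrow$ densely chaotic'' is not a valid general implication, and it fails for generalized shifts: take $\varphi(1)=\varphi(2)=2$ and $\varphi(n)=2n$ for $n\ge 3$ (the paper's own $\varphi_3$). Since $2$ is a fixed point, the argument of Proposition~\ref{javad10} shows $S$ misses the open set $\{x:x_2=p\}\times\{y:y_2=q\}$, so the system is not densely chaotic. Yet given any $x$ and any basic neighbourhood, flipping $x$ on $\{3\cdot 2^k:k\in T\}$ for an infinite $T$ with unbounded gaps (chosen to avoid the prescribed coordinates) produces $y$ in that neighbourhood with $\liminf_t D(\sigma_\varphi^t(x),\sigma_\varphi^t(y))=0$ and $\limsup_t D(\sigma_\varphi^t(x),\sigma_\varphi^t(y))\ge 2^{-3}$, because the fixed coordinate $2$ contributes nothing when $y_2=x_2$. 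So this system is spatiotemporally chaotic (indeed Li--Yorke sensitive with $\kappa=2^{-3}$) while $4$ and $5$ fail; the cycle cannot be closed through $3\Rightarrow 4$. This defect is inherited from the paper's own proof, which relies on the same unjustified implications $1\Rightarrow 2$ and $3\Rightarrow 4$; what your argument actually establishes is $2\Leftrightarrow 4\Leftrightarrow 5$ together with $5\Rightarrow 1\Rightarrow 3$, and the example above indicates that the converses of the latter two arrows are not recoverable. It is worth noting that your own $5\Rightarrow 2$ construction already contains the observation that exposes this gap; the same scrutiny should be applied to the step you quote from the paper.
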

\begin{proof}
Clearly (1) implies (2). By Lemma~\ref{javad10}, 
(2) implies (3). 
\\
(3 $\Rightarrow$  1): Suppose  $W(\varphi)=\Gamma$
by Lemma~\ref{narjes10} there exists $\mu>0$
such that for all $x=(x_\alpha)_{\alpha\in\Gamma}\in
X^\Gamma$, there exists
$y=(y_\alpha)_{\alpha\in\Gamma}\in X^\Gamma$ with
$((x_\alpha)_{\alpha\in\Gamma},(y_\alpha)_{\alpha\in\Gamma})\in
S(X^\Gamma,\sigma_\varphi)$ and
\[{\displaystyle\limsup_{t\to\infty}
D(\sigma_\varphi^t((x_\alpha)_{\alpha\in\Gamma}),\sigma_\varphi^t(
(y_\alpha)_{\alpha\in\Gamma})) }=\mu\:.\]
Suppose $W$ is an opene subset of $X^\Gamma\times X^\Gamma$, there exist opene subsets $U,V$ of $X^\Gamma$ with $U\times V\subseteq W$. Choose $u=(u_\alpha)_{\alpha\in\Gamma}\in U,
v=(v_\alpha)_{\alpha\in\Gamma}\in V$ and $\psi_1,\ldots,\psi_n\in\Gamma$ such that 
${\displaystyle\prod_{\alpha\in\Gamma}U_\alpha}\subseteq U,
{\displaystyle\prod_{\alpha\in\Gamma}V_\alpha}\subseteq V$ with
\[U_\alpha=\left\{\begin{array}{lc} \{u_\alpha\} & \alpha=\psi_1,\ldots,\psi_n\:, \\ X & {\rm otherwise\:,}
\end{array}\right. \SP and \SP
V_\alpha=\left\{\begin{array}{lc} \{v_\alpha\} & \alpha=\psi_1,\ldots,\psi_n\:, \\ X & {\rm otherwise\:.}
\end{array}\right. \]
Then for:
\[z_\alpha=\left\{\begin{array}{lc} u_\alpha & \alpha=\psi_1,\ldots,\psi_n\:, \\ x_\alpha & {\rm otherwise\:,}
\end{array}\right. \SP and \SP
w_\alpha=\left\{\begin{array}{lc} v_\alpha & \alpha=\psi_1,\ldots,\psi_n\:, \\ y_\alpha & {\rm otherwise\:.}
\end{array}\right. \]
by Lemma~\ref{taha50} we have $(z,w):=((z_\alpha)_{\alpha\in\Gamma},(w_\alpha)_{\alpha\in\Gamma})\in S(X^\Gamma,\sigma_\varphi)$ with 
\[{\displaystyle\limsup_{t\to\infty}D(\sigma_\varphi^t(z),\sigma_\varphi^t(w)) }={\displaystyle\limsup_{t\to\infty}D(\sigma_\varphi^t(x),\sigma_\varphi^t(y)) }=\mu\:.\]
Thus $(z,w)\in (U\times V)\cap S_{\frac\mu2}(X^\Gamma,\sigma_\varphi)$
and $W\cap S_{\frac\mu2}(X^\Gamma,\sigma_\varphi)\neq\varnothing$ for each opene subset $W$ of 
$X^\Gamma\times X^\Gamma$. Therefore $(X^\Gamma,\sigma_\varphi)$ is $\frac\mu2-$densely chaotic.
\end{proof}
\section{Sensitivity in generalized shift dynamical systems}
\noindent We recall that the dynamical system $(Z,f)$ with compact
metric phase space $(Z,d)$ is \cite{sharma}:
\begin{itemize}
\item {\it sensitive} if there exists $\varepsilon>0$ such that for
all $x\in Z$ and open neighbourhood $V$ of $x$ there exist $n\geq0$
and $y\in V$ with $d(f^n(x),f^n(y))>\varepsilon$;
\item {\it strongly sensitive}, if there exists $\varepsilon>0$ such that for
all $x\in Z$ and open neighbourhood $V$ of $x$ there exist $n_0\geq0$
and $y\in V$ with $d(f^n(x),f^n(y))>\varepsilon$ for all $n\geq n_0$.
\end{itemize}
As it has been mentioned in
\cite[Theorem 3]{kolyada},
$(Z,f)$ is sensitive if and only if there exists $\varepsilon>0$
such that $\{(x,y)\in Z\times Z:{\displaystyle\limsup_{n\to\infty}d(f^n(x),f^n(y))}>
\varepsilon\}$ is dense in $Z\times Z$. Using a similar method
described in Note~\ref{taha}
being sensitive (resp. strongly sensitive)
does not depend on compatible metric on $Z$.
Now we are ready to prove that the system $(X^\Gamma,\sigma_\varphi)$
is sensitive (resp. strongly sensitive)
if and only if it is Li-Yorke chaotic, i.e.
$W(\varphi)\neq\varnothing$.
\begin{theorem}\label{taha80}
If $W(\varphi)\neq\varnothing$, then $(X^\Gamma,\sigma_\varphi)$
is strongly sensitive.
\end{theorem}
\begin{proof}
Suppose $W(\varphi)\neq\varnothing$ and choose
$\theta\in W(\varphi)$, we may suppose $\theta=\beta_k$ (see Convention~\ref{taha10}).
If $U$ is an open neighbourhood of $x=(x_\alpha)_{\alpha\in\Gamma}\in X^\Gamma$, there exists finite subset $F$ of $\Gamma$
such that for $\{(y_\alpha)_{\alpha\in\Gamma}\in X^\Gamma:\forall\alpha\in F\: y_\alpha=x_\alpha\}\subseteq U$.
Since $\{\varphi^n(\theta)\}_{n\geq1}$ is a one to one
sequence, there exists $N\geq1$ such that for all $n\geq N$ we have
$\varphi^n(\theta)\notin F$, i.e.
$\{\varphi^n(\theta):n\geq N\}\subseteq \Gamma\setminus F$. So
\[\{(y_\alpha)_{\alpha\in\Gamma}\in X^\Gamma:\forall\alpha\neq\varphi^N(\theta),
\varphi^{N+1}(\theta),\varphi^{N+2}(\theta),\ldots,\:y_\alpha=x_\alpha\}\subseteq U\: .\]
For all $m\geq N$ choose $p_m\in X\setminus\{x_{\varphi^m(\theta)}\}$, also let
\[z_\alpha=\left\{ \begin{array}{lc} p_m & \alpha=\varphi^m(\theta), m\geq N\:, \\
x_\alpha & {\rm otherwise\: .}\end{array}\right.\]
Then $(z_\alpha)_{\alpha\in\Gamma}\in U$ and
for $m\geq N$,
$(u_\alpha)_{\alpha\in\Gamma}:=\sigma_\varphi^m((z_\alpha)_{\alpha\in\Gamma})$,
$(v_\alpha)_{\alpha\in\Gamma}:=\sigma_\varphi^m((x_\alpha)_{\alpha\in\Gamma})$
we have
\begin{align*}
D(\sigma_\varphi^m((x_\alpha)_{\alpha\in\Gamma}),
\sigma_\varphi^m((z_\alpha)_{\alpha\in\Gamma})) &= D((v_\alpha)_{\alpha\in\Gamma},
(u_\alpha)_{\alpha\in\Gamma}) \\
& \geq  \dfrac{\delta(v_\theta,u_\theta)}{2^k} =  \dfrac{\delta(x_{\varphi^m(\theta)},z_{\varphi^m(\theta)})}{2^k} \\
& =  \dfrac{\delta(x_{\varphi^m(\theta)},p_m)}{2^k}=\dfrac1{2^k}\:.
\end{align*}
Hence $(X^\Gamma,\sigma_\varphi)$
is strongly sensitive.
\end{proof}
\begin{theorem}\label{taha90}
If $W(\varphi)=\varnothing$, then $(X^\Gamma,\sigma_\varphi)$
is not sensitive.
\end{theorem}
\begin{proof}
Suppose $W(\varphi)=\varnothing$ and consider arbitrary
$\varepsilon>0$, then there exists $N\geq1$ such that
$\dfrac1{2^N}<\varepsilon$. Since $W(\varphi)=\varnothing$,
for all $\alpha\in\Gamma$ the set $\{\varphi^n(\alpha):n\geq0\}$
is finite. Thus
\[\Lambda:=\{\varphi^n(\beta_i):i\in\{1,\ldots,N\},n\geq0\}\]
is finite too, for $x=(x_\alpha)_{\alpha\in\Gamma}\in X^\Gamma$,
$U=\{(y_\alpha)_{\alpha\in\Gamma}\in X^\Gamma:\forall\alpha\in\Lambda\:(
y_\alpha=x_\alpha)\}$ is an open neighbourhood of
$(x_\alpha)_{\alpha\in\Gamma}\in X^\Gamma$.
For $n\geq0$ and $(y_\alpha)_{\alpha\in\Gamma}\in U$
let $(v_\alpha)_{\alpha\in\Gamma}:=\sigma_\varphi^n((x_\alpha)_{\alpha\in\Gamma})$
and $(w_\alpha)_{\alpha\in\Gamma}:=\sigma_\varphi^n((y_\alpha)_{\alpha\in\Gamma})$,
then for all $\alpha\in\Lambda$ we have $\varphi^n(\alpha)\in\Lambda$
and $v_\alpha=x_{\varphi^n(\alpha)}=y_{\varphi^n(\alpha)}=w_\alpha$, thus
\[D(\sigma_\varphi^n((x_\alpha)_{\alpha\in\Gamma}),
\sigma_\varphi^n((y_\alpha)_{\alpha\in\Gamma}))
={\displaystyle\sum_{i\geq1,\beta_i\notin\Lambda}\dfrac{\delta(v_{\beta_i},w_{\beta_i})}{2^i}}
\leq{\displaystyle\sum_{i>N}\dfrac1{2^i}}=\dfrac1{2^N}<\varepsilon\:.\]
So for all $\varepsilon>0$ and $x\in X^\Gamma$ there exists open neighbourhood $U$
of $x$ such that $D(\sigma_\varphi^n(x),\sigma_\varphi^n(y))<\varepsilon$
for all $y\in U$ and $n\geq0$, which leads to the desired result.
\end{proof}
\noindent Using Lemmas~\ref{narjes10} and \ref{taha50}, we have
the following theorem.
\begin{theorem}\label{javad20}
If $\varphi:\Gamma\to\Gamma$ does not have any periodic point,
then $(X^\Gamma,\sigma_\varphi)$
is Li-Yorke sensitive.
\end{theorem}
\begin{proof}
Suppose $\varphi:\Gamma\to\Gamma$ does not have
any periodic point then by Lemma~\ref{narjes10} there exists $\mu>0$
such that for all $a\in
X^\Gamma$, there exists
$b_a\in X^\Gamma$ with
\[(a,b_a)\in
S(X^\Gamma,\sigma_\varphi)\wedge
{\displaystyle\limsup_{t\to\infty}
D(\sigma_\varphi^t(a),\sigma_\varphi^t(
b_a)) }=\mu\:.\]
Consider $x=(x_\alpha)_{\alpha\in\Gamma}\in X^\Gamma$, and $b_x=:y=(y_\alpha)_{\alpha\in\Gamma}$.
For all $n\geq1$ define $y_n=(y_\alpha^n)_{\alpha\in\Gamma}$ with:
\[y^n_\alpha=\left\{\begin{array}{lc}
x_\alpha & \alpha\in\{\beta_1,\ldots,\beta_n\}\:, \\
y_\alpha & {\rm otherwise}\:. \end{array}\right.\]
Using Lemma~\ref{taha50}  for all $n\geq 1$ we have
\[{\displaystyle\limsup_{t\to\infty}
D(\sigma_\varphi^t(x),\sigma_\varphi^t(
y_n)) }=
{\displaystyle\limsup_{t\to\infty}
D(\sigma_\varphi^t(x),\sigma_\varphi^t(
b_x) })=\mu\]
and
\[{\displaystyle\liminf_{t\to\infty}
D(\sigma_\varphi^t(x),\sigma_\varphi^t(
y_n)) }=
{\displaystyle\liminf_{t\to\infty}
D(\sigma_\varphi^t(x),\sigma_\varphi^t(
b_x) })=0\:,\]
in particular
$(x,y_n)\in S_{\frac\mu2}(X^\Gamma,\sigma_\varphi)$. Moreover by 
${\displaystyle\lim_{n\to\infty}y_n}=
x$ for all open neighbourhood $U$ of
$x$ there exists $n\in\mathbb N$ with
$y_n\in U$ which completes the proof.
\end{proof}
\begin{theorem}\label{taha95}
The generalized shift dynamical system $(X^\Gamma,\sigma_\varphi)$ is Li--Yorke sensitive
if and only if $\varphi:\Gamma\to\Gamma$ has at least one non--quasi--periodic point.
\end{theorem}
\begin{proof}
 If $(X^\Gamma,\sigma_\varphi)$ is Li--Yorke sensitive, then $S(X^\Gamma,\sigma_\varphi)\neq\varnothing$ and
 by Remark~\ref{factA} $\varphi$ has a non--quasi--periodic point.
 \\
 On the other hand if $\varphi$ has a non--quasi--periodic point $\theta\in\Gamma$, then
 for $\Lambda=\bigcup\{\varphi^n(\theta):n\in{\mathbb Z}\}$, $\varphi\restriction_\Lambda:\Lambda\to\Lambda$
 does not have any periodic point and by Theorem~\ref{javad20} $(X^\Lambda,\sigma_{\varphi\restriction_\Lambda})$
 is Li--Yorke sensitive. We may suppose $\Lambda=\{\beta_{s_1},\beta_{s_2},\ldots\}$ with $s_1<s_2<\cdots$, consider $p\in X$
 and equip $X^\Lambda$ with metric $D_\Lambda(x,y)=D(x^*,y^*)$, where for $x=(x_\alpha)_{\alpha\in\Lambda}\in X^\Lambda$ we have $x^*_\alpha=x_\alpha$ for $\alpha\in\Lambda$, $x^*_\alpha=p$ for $\alpha\notin\Lambda$,
 and $x^*=(x_\alpha^*)_{\alpha\in\Gamma}$.
Since $(X^\Lambda,\sigma_{\varphi\restriction_\Lambda})$
 is Li--Yorke sensitive there exists $\kappa>0$ such that for all $x\in X^\Lambda$ and $\varepsilon>0$ there exists
 $y\in X^\Lambda$ with $D_\Lambda(x,y)<\varepsilon$ and $(x,y)\in S_\kappa(X^\Lambda,\sigma_{\varphi\restriction_\Lambda})$.
\\
Consider $z=(z_\alpha)_{\alpha\in\Gamma}\in X^\Gamma$ and open neighbourhood $V_0$ of $z$. There 
exists $r>0$ with $V:=\{a\in X^\Gamma:D(a,z)<r\}\subseteq V_0$,
Natural projection map
$p_\Lambda:\mathop{X^\Gamma\to X^\Lambda}\limits_{(x_\alpha)_{\alpha\in\Gamma}\mapsto(x_\alpha)_{\alpha\in\Lambda}}$
 is open and continuous hence there exists $\varepsilon>0$ such that
 \[\{a\in X^\Lambda:D_\Lambda(p_\Lambda(z),a)<\varepsilon\}\subseteq p_\Lambda(V)\:,\]
 so there exists 
 $w\in \{a\in X^\Lambda:D_\Lambda(p_\Lambda(z),a)<\varepsilon\}$ and
 $y\in V$ with $p_\Lambda(y)=w$ and 
 $(p_\Lambda(z),w)\in S_\kappa(X^\Lambda,\sigma_{\varphi\restriction_\Lambda})$,
 i.e. $(p_\Lambda(z),p_\Lambda(y))\in S_\kappa(X^\Lambda,\sigma_{\varphi\restriction_\Lambda})$.
 For $h=(h_\alpha)_{\alpha\in\Lambda}$ let:
 \[\overline{h}_\alpha:=\left\{\begin{array}{lc}h_\alpha & \alpha\in\Lambda\:, \\ z_\alpha & {\rm otherwise\:,}
 \end{array} \right. \]
 and $\overline{h}=(\overline{h}_\alpha)_{\alpha\in\Gamma}$.
 For $t\geq0$ we have $D(\sigma_\varphi^t(z),\sigma_\varphi^t(\overline{p_\Lambda(y)})=D_\Lambda(\sigma^t_{\varphi
 \restriction_\Lambda}(p_\Lambda(z)),\sigma^t_{\varphi
 \restriction_\Lambda}(p_\Lambda(y))$, therefore $(z,\overline{p_\Lambda(y)})\in S_\kappa(X^\Gamma,\sigma_\varphi)$.
 By $D(z,\overline{p_\Lambda(y)})\leq D(z,y)<r$ we have $\overline{p_\Lambda(y)}\in V\subseteq V_0$ and obtain the desired result.
\end{proof}
\noindent Using Theorems \ref{taha80}, \ref{taha90} and \ref{taha95}
we have the following theorem:
\begin{theorem}[sensitive generalized shifts]\label{110}
The following statements are equivalent:
\begin{itemize}
\item[1.] $(X^\Gamma,\sigma_\varphi)$ is strongly sensitive;
\item[2.] $(X^\Gamma,\sigma_\varphi)$ is sensitive;
\item[3.] $(X^\Gamma,\sigma_\varphi)$ is Li--Yorke sensitive;
\item[4.] $\varphi:\Gamma\to\Gamma$ has at least one non--quasi--periodic point.
\end{itemize}
\end{theorem}
\begin{proof}
It's clear that (1) implies (2). By Theorem~\ref{taha90}, (2) implies (4). By Theorem~\ref{taha80}, (4) implies (1). By Theorem~\ref{taha95}, (3) and (4) are equivalent.
\end{proof}
\subsection{Two diagrams}
\noindent By \cite[Theorem 2.13]{dev} the system $(X^\Gamma,\sigma_\varphi)$
is Devaney chaotic (resp. topological transitive)
if and only if $\varphi:\Gamma\to\Gamma$ is one to one without periodic points, now we are
ready to summarize Sections 3 and 4 in the following diagram, which
completes the mentioned diagram in Introduction:
\[\scalebox{0.8}{\xymatrix{\varphi{\rm \: is \: one \: to \: one \: and\:}Per(\varphi)=\varnothing\ar@{=>}[r]
\ar@{<=>}[d] &
W(\varphi)=\Gamma\ar@{=>}[r]\ar@{<=>}[d] & W(\varphi)\neq\varnothing\ar@{<=>}[d] \\
(X^\Gamma,\sigma_\varphi)\:{\rm is \: Devaney \: chaotic} &
(X^\Gamma,\sigma_\varphi)\: {\rm is \: densely \: chaotic} &
(X^\Gamma,\sigma_\varphi)\: {\rm is \: Li-Yorke \: chaotic}}}\]
Let:

$\mathcal{C}:=\{(X^\Gamma,\sigma_\eta):\eta\in\Gamma^\Gamma\}$, 

$\mathcal{C}_{Devaney}:=\{(X^\Gamma,\sigma_\eta)\in\mathcal{C}:(X^\Gamma,\sigma_\eta)$
is Devaney chaotic$\}$, 

$\mathcal{C}_{Densely}:=\{(X^\Gamma,\sigma_\eta)\in\mathcal{C}:(X^\Gamma,\sigma_\eta)$
is Densely chaotic$\}$, 

$\mathcal{C}_{LY}:=\{(X^\Gamma,\sigma_\eta)\in\mathcal{C}:(X^\Gamma,\sigma_\eta)$
is Li--Yorke chaotic$\}$.
\\
For infinite $\Gamma$, suppose $\beta_n$s are distinct, then we have the following diagram:
\begin{center}
{\small
\begin{tabular}{|c|}
\hline 
$\mathcal{C}$ \\
	\begin{tabular}{|c|} \hline
	$\mathcal{C}_{LY}$ \\
		\begin{tabular}{|c|} \hline
		$\mathcal{C}_{Densely}$ \\
			\begin{tabular}{|c|} \hline
			$\mathcal{C}_{Devaney}$ \\
			E1 \\ \hline
			\end{tabular}\\
		E2 \\ \hline
		\end{tabular}\\
	E3 \\ \hline
	\end{tabular} \\
E4 \\ \hline
\end{tabular}}
\end{center}
Where ``Ei'' denotes Example $(X^\Gamma,\sigma_{\varphi_i})$ for $\varphi_i:\Gamma\to\Gamma$ 
with:
\begin{itemize}
\item $\varphi_1(\beta_n)=\beta_{2n}$ for $n\geq1$, 
\item $\varphi_2(\beta_1)=\varphi_2(\beta_2)=\beta_3$, and $\varphi_2(\beta_n)=\beta_{2n}$ for $n\geq3$,
\item $\varphi_3(\beta_1)=\varphi_3(\beta_2)=\beta_2$, and $\varphi_3(\beta_n)=\beta_{2n}$ for $n\geq3$,
\item $\varphi_4(\beta_1)=\beta_1$, and $\varphi_4(\beta_n)=\beta_{n-1}$ for $n\geq2$.
\end{itemize}
\subsection{On more types of sensitivity} 
\noindent In this subsection consider dynamical system $(Z,f)$ with compact 
metric phase space $(Z,d)$. 
\begin{note}\label{setareh1}
According
to \cite{sharma}
the dynamical system $(Z,f)$  is
{asymptotic sensitive} if there exists $\varepsilon>0$ such that for
all $x\in Z$ and open neighbourhood $V$ of $x$ there exists
$y\in V$ with 
\linebreak
${\displaystyle\limsup_{n\to\infty} d(f^n(x),f^n(y))}>\varepsilon$.
Let's verify the following diagram:
\begin{center}
strongly sensitive $\Rightarrow$ asymptotic sensitive $\Rightarrow$ sensitive.
\end{center}
Note that if $(Z,f)$ is strongly sensitive, then there exists $\kappa>0$ such that for
all $x\in Z$ and open neighbourhood $V$ of $x$ there exist $n_0\geq0$
and $y\in V$ with $d(f^n(x),f^n(y))>\kappa$ for all $n\geq n_0$, hence
\[{\displaystyle \limsup_{n\to\infty} d(f^n(x),f^n(y))}=
{\displaystyle \limsup_{\mathop{n\to\infty}\limits_{n\geq n_0}} d(f^n(x),f^n(y))}\geq\kappa>\frac\kappa2=:\mu\:,\]
thus $(Z,f)$ is asymptotic sensitive.
\\
Moreover if $(Z,f)$ is asymptotic sensitive, then 
there exists $\mu>0$ such that for
all $x\in Z$ and open neighbourhood $V$ of $x$ there exists
$y\in V$ with 
\linebreak
${\displaystyle\limsup_{n\to\infty} d(f^n(x),f^n(y))}>\mu$, thus
$\{n\geq1:d(f^n(x),f^n(y))>\mu\}$ is infinite and in particular it is nonempty. Hence $(Z,f)$ is  sensitive.
\end{note}
\noindent For $\varepsilon>0$ and open subset $V$ of $Z$ let
$N(V,\varepsilon):=\{n\geq0:\exists x,y\in V\:(d(f^n(x),f^n(y))>\varepsilon)\}$
and we call $A\subseteq\mathbb{N}\cup\{0\}$ {\it syndetic}
if there exists $N\geq1$ with $\{i,i+1,\ldots,i+N\}\cap A\neq\varnothing$
for all $i\geq 1$. 
\begin{note}\label{setareh}
$(Z,f)$ is sensitive if and only if there exists $\kappa>0$ such that for all opene subset $V$ of $Z$,
$N(V,\kappa)\neq\varnothing$.
\\
First suppose $(Z,f)$ is sensitive, then there exists $\varepsilon>0$ such that for all $x\in X$ and open neighbourhood
$V$ of $x$ there exists $y\in V$ and $n\geq0$ with $d(f^n(x),f^n(y))>\varepsilon$. For opene subset $W$
of $Z$ choose $z_1\in W$, then there exists $z_2\in W$ and $m\geq0$ with $d(f^m(z_1),f^m(z_2))>\varepsilon$,
thus $m\in N(W,\varepsilon)$ and $N(W,\varepsilon)\neq\varnothing $ for all opene subset $W$ of $Z$.
\\
Now suppose there exists $\mu>0$ such that $N(V,\kappa)\neq\varnothing$ for all opene subset $V$ of $Z$.
For all $x\in Z$ and open neighbouhhood $V$ of $x$ we have $N(V,\kappa)\neq\varnothing$ thus there exist
$y,z\in V$ and $n\geq0$ with $d(f^n(z),f^n(y))>\kappa$, therefore $d(f^n(x),f^n(y))>\frac\kappa2$ or
$d(f^n(x),f^n(z))>\frac\kappa2$. Thus $(Z,f)$ is sensitive.
\end{note}
\begin{note}\label{setareh2}
According to \cite{chin} we call
$(Z,f)$ {\it syndetically sensitive} (resp. {\it cofinitely sensitive})
if there exists $\varepsilon>0$ such that for all  opene subset
$V$ of $Z$, $N(V,\varepsilon)$ is syndetic (resp. cofinite). Let's verify the following diagram:
\begin{center}
strongly sensitive $\Rightarrow$ cofinitely sensitive
$\Rightarrow$ syndetically sensitive $\Rightarrow$ sensitive.
\end{center}
Note that if $(Z,f)$ is strongly sensitive, then there exists $\kappa>0$ such that for
all $x\in Z$ and open neighbourhood $V$ of $x$ there exist $n_0\geq0$
and $y\in V$ with $d(f^n(x),f^n(y))>\kappa$ for all $n\geq n_0$. 
Thus for opene $W$ of $Z$ choose $z\in W$, there exist $m\geq0$ and $y\in W$
with $d(f^n(z),f^n(y))>\kappa$ for all $n\geq m$, thus 
$\{m,m+1,\ldots\}\subseteq \{n\geq0:\exists p,q\in W\:(d(f^n(p),f^n(q))>\kappa)\}=N(W,\kappa)$
and $N(W,\kappa)$ is cofinite, Hence $(Z,f)$ is cofinitely sensitive.
\\
Since any cofinite subset of $\mathbb{N}\cup\{0\}$ is syndetic, if $(Z,f)$ is cofinitely sensitive, then it is 
syndetically sensitive.
Since any syndetic subset of $\mathbb{N}\cup\{0\}$ is nonempty, if $(Z,f)$ is syndetically sensitive, then it is sensitive
(use Note~\ref{setareh}).
\end{note}
\begin{note}\label{setareh3}
Regarding \cite{chin} we call $(Z,f)$
{\it multi--sensitive} if
there exists $\varepsilon>0$ such that for all $k\geq1$ and
 opene subsets
$V_1,\ldots,V_k$ of $Z$, we have ${\displaystyle\bigcap_{1\leq n\leq k}
N(V_n,\varepsilon)}\neq\varnothing$. Let's verify the following diagram:
\begin{center}
strongly sensitive $\Rightarrow$ multi-sensitive $\Rightarrow$ sensitive.
\end{center}
Note that if $(Z,f)$ is strongly sensitive, then there exists $\kappa>0$ such that for
all $x\in Z$ and open neighbourhood $V$ of $x$ there exist $n_0\geq0$
and $y\in V$ with $d(f^n(x),f^n(y))>\kappa$ for all $n\geq n_0$. Suppose
$V_1\ldots,V_k$ are opene subsets of $Z$, for each $i$ choose $x_i\in V_i$, then
there exists $n_i\geq0$
and $y_i\in V_i$ with $d(f^n(x_i),f^n(y_i))>\kappa$ for all $n\geq n_i$. Thus
for $m=\max(n_1,\cdots,n_k)$ we have $\{m,m+1,\ldots\}\subseteq 
{\displaystyle\bigcap_{1\leq i\leq k}
N(V_i,\kappa)}$, in particular ${\displaystyle\bigcap_{1\leq i\leq k}
N(V_i,\kappa)}\neq\varnothing$ and $(Z,f)$is multi--sensitive.
\\
By Note~\ref{setareh}, if $(Z,f)$ is multi--sensitive, then it is sensitive.
\end{note}
\begin{lemma}\label{lemsetareh}
If $(Z,f)$ is cofinitely sensitive, then there exists 
$\varepsilon>0$ such that 
\[{\displaystyle\lim_{n\to\infty}\frac{|N(V,\varepsilon)\cap\{0,\ldots,n\}|}{n+1}}=1\]
for all opene $V$ subset of $Z$.
\end{lemma}
\begin{proof}
Since $(Z,f)$ is cofinitely sensitive there exists $\varepsilon>0$ such that for all  opene subset
$V$ of $Z$, $N(V,\varepsilon)$ is cofinite. Hence for opene subset $V$ of $Z$ there exists
$m\geq1$ such that $\{m,m+1,\ldots\}\subseteq N(V,\varepsilon)$, thus for all $n\geq m$ we have 
\[\frac{n-m+1}{n+1}=\frac{|\{m,m+1,\ldots n\}\cap\{0,\ldots,n\}|}{n+1}\leq \frac{|N(V,\varepsilon)\cap\{0,\ldots,n\}|}{n+1}\leq1\]
which leads to 
${\displaystyle\lim_{n\to\infty}\frac{|N(V,\varepsilon)\cap\{0,\ldots,n\}|}{n+1}}=1$ and completes the proof.
\end{proof}
\begin{note}\label{setareh4}
We call $(Z,f)$ {\it ergodically sensitive} \cite{chin}
if there exists $\varepsilon>0$ such that for all  opene subset
$V$ of $Z$,
${\displaystyle\limsup_{n\to\infty}\frac{|N(V,\varepsilon)\cap\{0,\ldots,n\}|}{n+1}}>0$. 
By Notes~\ref{setareh2},~\ref{setareh} and Lemma~\ref{lemsetareh} it is easy to see that we have
the following diagram:
\begin{center}
strongly sensitive $\Rightarrow$ cofinitely sensitive $\Rightarrow$ ergodically sensitive $\Rightarrow$ sensitive.
\end{center}
\end{note}
\begin{theorem}
For finite discrete $X$ with at least two elements, nonempty countable set $\Gamma$ and
$\varphi:\Gamma\to\Gamma$ the following statements are equivalent:
\begin{itemize}
\item[1.] the system $(X^\Gamma,\sigma_\varphi)$ is
Li-Yorke chaotic (i.e. $(X^\Gamma,\sigma_\varphi)$ has an scrambled pair by \cite[Theorem 3.3]{li});
\item[2.] the system $(X^\Gamma,\sigma_\varphi)$ is topological chaotic;
\item[3.] the system $(X^\Gamma,\sigma_\varphi)$ is spatiotemporally chaotic;
\item[4.] the system $(X^\Gamma,\sigma_\varphi)$ is sensitive;
\item[5.] the system $(X^\Gamma,\sigma_\varphi)$ is Li--Yorke sensitive;
\item[6.] the system $(X^\Gamma,\sigma_\varphi)$ is strongly sensitive;
\item[7.] the system $(X^\Gamma,\sigma_\varphi)$ is asymptotic sensitive;
\item[8.] the system $(X^\Gamma,\sigma_\varphi)$ is syndetically sensitive;
\item[9.] the system $(X^\Gamma,\sigma_\varphi)$ is cofinitely sensitive;
\item[10.] the system $(X^\Gamma,\sigma_\varphi)$ is multi-sensitive;
\item[11] the system $(X^\Gamma,\sigma_\varphi)$ is ergodically sensitive;
\item[12.] the map $\varphi:\Gamma\to\Gamma$ has at least non--quasi--periodic point.
\end{itemize}
\end{theorem}
\begin{proof}
(1), (2), and (12) are equivalent by Remark~\ref{factA}.
\\
(4), (5), (6), and (12) are equivalent by Theorem~\ref{110}.
\\
(4), and (7) are equivalent by Theorem~\ref{110} and Note~\ref{setareh1}.
\\
(4), (8), and (9) are equivalent by Theorem~\ref{110} and Note~\ref{setareh2}.
\\
(4), and (10) are equivalent by Theorem~\ref{110} and Note~\ref{setareh3}.
\\
(4), and (11) are equivalent by Theorem~\ref{110} and Note~\ref{setareh4}.
\\
So (1) and  (5) are equivalent. 
In order to complete the proof, it's enough to show (5) imply (3), and (3) imply (1).
\\
(5$\Rightarrow$3): Suppose  $(X^\Gamma,\sigma_\varphi)$ is Li--Yorke sensitive. Then 
 there exists $\kappa>0$ such that
for every $x\in X^\Gamma$ and open neighbourhood $U$ of $x$ there exists
$y\in U$ with $(x,y)\in S_\kappa(X^\Gamma,\sigma_\varphi)$, since 
$S_\kappa(X^\Gamma,\sigma_\varphi)\subseteq S(X^\Gamma,\sigma_\varphi)$, we have
$(x,y)\in S(X^\Gamma,\sigma_\varphi)$ and $x,y$ are scrambled, hence 
$(X^\Gamma,\sigma_\varphi)$ is spatiotemporally chaotic.
\\
(3$\Rightarrow$1): Suppose  $(X^\Gamma,\sigma_\varphi)$ is spatiotemporally chaotic. Then
for every $x\in X^\Gamma$ and open neighbourhood $U$ of $x$ there exists
$y\in U$ such that $x,y$ are scrambled. Choose $a\in X^\Gamma$, then there exists $b\in X^\Gamma$ such that
$a,b$ are scrambled and (1) is valid.
\end{proof}
\section*{Acknowledgement}
\noindent The authors would like to express their deep thanks to the referee 
for his/her useful comments.


\begin{thebibliography}{99}

\bibitem{set}  F. Ayatollah Zadeh Shirazi, D. Dikranjan (2012). Set theoretical entropy: A tool to compute
topological entropy. In Proceedings ICTA 2011, Islamabad, Pakistan, July 4--10, 2011, pp. 11--32.
Cambridge Scientific Publishers, Islamabad, Pakistan.

\bibitem{note} F. Ayatollah Zadeh Shirazi, N. Karami Kabir, F. Heydari Ardi (2008). A note on shift theory.
Mathematica Pannonica, 19 (2), 187--195. Proceedings of ITES--2007.

\bibitem{li} F. Ayatollah Zadeh Shirazi, J. Nazarian Sarkooh (2014). Li--Yorke chaotic generalized shift
dynamical systems. Caspian Journal of Mathematics, 3 (2), 295--299.

\bibitem{dev} F. Ayatollah Zadeh Shirazi, J. Nazarian Sarkooh, B. Taherkhani (2013). On Devaney chaotic
generalized shift dynamical systems. Studia Scientiarum Mathematicarum Hungarica, 50(4),
509--522.

\bibitem{banks} J. Banks, J. Brooks, G. Cairns, G. Davis, P. Stacey (1992). On Devaney’s definition of chaos.
The American Mathematical Monthly, 99 (4), 332--334.

\bibitem{mai} N. Degirmenci, S. Kocak (2003). Existence of a dense orbit and topological transitivity:
when are they equivalent? Acta Mathematica Hungarica, 99(3), 185--187.

\bibitem{sysdevaney}  R. L. Devaney (1986). An introduction to chaotic dynamical systems. The Benjamin/Cummings
Publishing Co., Inc.

\bibitem{fedeli} A. Fedeli (2018). Topologically sensitive dynamical systems. Topology and its applications, 248,
192--203.

\bibitem{filip} F. Garcia--Ramos, J. Li, R. Zhang (2019). When is a dynamical system mean sensitive?
Ergodic Theory and Dynamical Systems, 39(6), 1608--1636.

\bibitem{kolyada} S. Kolyada (2004). Li--Yorke sensitivity and other conceps of chaos. Ukrainian Mathematical
Journal, 56(8), 1242--1257.

\bibitem{chin} R. Li (2012). A note on stronger forms of sensitivity for dynamical systems. Chaos, Solitons
\& Fractals, 45, 753--758.

\bibitem{was} R. Li, T. Lu, A. Waseem (2019). Sensitivity and transitivity of systems satisfying the large
deviations theorem in a sequence. International Journal of Bifurcation and Chaos, 29(9). Art id
1950125 (doi: 10.1142/S0218127419501256), 9 pages.

\bibitem{mor} E. Murinova (2008). Generic chaos in metric spaces. Acta Univ. M. Belii Ser. Math., 8, 43--50.

\bibitem{new-york} W. Ott, M. Tomforde, P. N. Willis (2014). One--sided shifts paces over infinite alphabets. New
York Journal of Mathematics,. NYJM Monographs 5, State University of New York, University
at Albany, Albany, NY, 54 pp.

\bibitem{sharma} P. Sharma, A. Nagar (2010). Inducing sensitivity on hyperspaces. Topology and its Applications, 157, 2052--2058.

\bibitem{walters} P. Walters (1982). An introduction to ergodic theory. Springer–Verlag. Graduate Texts in
Mathematics, 79.

\bibitem{nat} H. Wang, J. Xiong, F. Tan (2010). Furstenberg families and sensitivity. Discrete Dynamics in
Nature and Society. Art. ID 649348 (doi:10.1155/2010/649348), 12 pp.

\bibitem{tw} T. Wang, J. Yin, Q. Yan (2016). The sufficient conditions for dynamical systems of semigroup actions to have some stronger forms of sensitivities. Journal of Nonlinear Science and
Applications, 9, 989--997.

\end{thebibliography}
\[\underline{\SP\SP\SP\SP\SP\SP\SP\SP\SP\SP\SP\SP\SP\SP\SP\SP}\]
\noindent {\small {\bf Fatemah Ayatollah Zadeh Shirazi},
Faculty of Mathematics, Statistics and Computer Science,
College of Science, University of Tehran,
Enghelab Ave., Tehran, Iran
\linebreak
({\it e-mail}: f.a.z.shirazi@ut.ac.ir, fatemah@khayam.ut.ac.ir)}
\\
{\small {\bf Fatemeh Ebrahimifar},
Faculty of Mathematics, Statistics and Computer Science,
College of Science, University of Tehran,
Enghelab Ave., Tehran, Iran
\linebreak
({\it e-mail}: ebrahimifar64@ut.ac.ir)}
\\
{\small {\bf Maryam Hagh Jooyan},
Department of Mathematics, Tarbbiat Modares University,
Tehran, Iran
({\it e-mail}: haghjooyanmaryam@gmail.com)}
\\
{\small {\bf Arezoo Hosseini},
Faculty of Mathematics, College of Science, Farhangian University, Pardis Nasibe--shahid sherafat, Enghelab Ave., Tehran, Iran
({\it e-mail}: a.hosseini@cfu.ac.ir)}
\end{document}